%
%
\documentclass[fleqn,preprint,3p,a4paper]{elsarticle}
\usepackage{amsmath,amsthm,amssymb} 
\newtheorem{theorem}{Theorem}[section]

\newtheorem{proposition}[theorem]{Proposition}

\newtheorem{definition}[theorem]{Definition}
\newtheorem{remark}[theorem]{Remark}
\newtheorem{example}[theorem]{Example}
\newdefinition{problem}[theorem]{Problem}
\journal{Topology and its Applications}

\begin{document}
\begin{frontmatter}
\title{On some kinds of weakly sober spaces\tnoteref{t1}}
\tnotetext[t1]{This research is supported by  the National Natural Science Foundation of China (No. 11661057)and the Natural Science Foundation of Jiangxi Province (No. 20161BAB2061004).}
\author[mymainaddress]{Xinpeng Wen}
\ead{wenxinpeng2009@163.com}
\author[mysecondaryaddress]{Xiaoquan Xu\corref{mycorrespondingauthor}}
\cortext[mycorrespondingauthor]{Corresponding author}
\ead{xiqxu2002@163.com}
\address[mymainaddress]{College of Mathematics and Information, Nanchang Hangkong University, Nanchang, Jiangxi 330063, China}
\address[mysecondaryaddress]{School of Mathematics and Statistics, Minnan Normal University, Zhangzhou, Fujian 363000, China}

\begin{abstract}
In \cite{E_2018}, Ern\'e relaxed the concept of sobriety in order to extend the theory of sober spaces and locally hypercompact spaces to situations where directed joins were missing, and introduced three kinds of non-sober spaces: cut spaces, weakly sober spaces, and quasisober spaces. In this paper, their basic properties are investigated. It is shown that some properties which are similar to that of sober spaces hold and others do not hold.
\end{abstract}

\begin{keyword}
Sober spaces; Cut spaces; Weakly sober spaces; Quasisober spaces

\MSC 06B35; 06F30; 54B99; 54D30

\end{keyword}

\end{frontmatter}

\section{Introduction}
In \cite{E_2018}, Ern\'e relaxed the concept of sobriety in order to extend the theory of sober spaces and locally hypercompact spaces to situations where directed joins were missing. To that aim, he replaced joins by cuts, and introduced three kinds of non-sober spaces: cut spaces, weakly sober spaces, and quasisober spaces. This approach generalized and facilitated many results in the theory of quasicontinuous posets, and results about the more
restricted quasialgebraic domains and $s_2$-quasialgebraic posets are then easy consequences. For example, Ern\'e \cite{E_2018} proved that the locally hypercompact (resp., hypercompactly based) weakly sober spaces (quasisober spaces, cut spaces) are exactly the weak Scott spaces
of $s_2$-quasicontinuous (resp., $s_2$-quasialgebraic) posets in the sense of Zhang and Xu \cite{ZX_2015}. For weakly sober (cut, quasisober) $C$-spaces (resp., $B$-spaces), we have the similar results (see \cite{E_2018}).

In this paper, we will investigate the basic properties of cut spaces, weakly sober spaces and quasisober spaces. It is shown that some properties which are similar to that of sober spaces hold and others do not hold. The main results are: 

(1) We illustrate that a closed subspace of a quasisober space (cut space, weakly sober space) is not always a quasisober space (cut space, weakly sober space) by presenting a counterexample.

(2) A saturated subspace of a weakly sober space (cut space) is a weakly sober space (cut space).
(3) We show that a saturated subspace of a quasisober space is not always a quasisober space by presenting a counterexample.

(4) An open subspace of a quasisober space is quasisober.

(5) By presenting a counterexample we illustrate that the products of quasisober spaces (cut spaces, weakly sober spaces) is not always a quasisober space (cut space, weakly sober space).

(6) We show that a retract of a quasisober space (cut space, weakly sober space) is not always a quasisober space (cut space, weakly sober space) by presenting a counterexample.

(7) By presenting a counterexample we illustrate that a image subspace of continuous closure operator of a quasisober space (cut space, weakly sober space) is not always a quasisober space (cut space, weakly sober space).

(8) A image subspace of a continuous kernel operator of a quasisober space (cut space, weakly sober space) is a quasisober space(cut space, weakly sober space).

(9) If two topological spaces $X$ and $Y$ are quasisober spaces (cut spaces, weakly sober spaces), then the set $Top(X,Y)$ of all continuous functions $f:X\rightarrow Y$ equipped with the topology of pointwise convergence is not always a quasisober space (cut space, weakly sober space).

(10) We show that the Smyth power space of a quasisober space (cut space, weakly sober space) is not always a quasisober space (cut space, weakly sober space) by presenting a counterexample.

(11) If the Smyth power space  ~$P^s(X)$ of a topological space $X$ is a cut space, then $X$ is a cut space.

(12) If the Smyth power space $P^s(X)$ of a $T_0$ topological space $X$ is weakly sober, then $X$ is weakly sober.

(13) By presenting a counterexample we illustrate that the quasisoberity of the Smyth power space $P^s(X)$ of a topological space $X$ does not always imply the quasisoberity of ~$X$.

(14) If the Smyth power space $P^s(X)$ of a $T_0$ well-filtered topological space $X$ is quasisober, then $X$ is quasisober.

(15) If the Smyth power space $P^s(X)$ of a $T_0$ topological space $X$ is quasisober and the order of specialization on $X$ is a sup semilattice, then $X$ is quasisober.

\section{Preliminaries}

First we recall some basic definitions and notions used in this note; more details can be found in \cite{GHKLMS_2003, JGL_2013, RE_1989}. For a poset $P$ and $A\subseteq B\subseteq P$, let $\uparrow\! A=\{x\in P: a\leq x$ for some $a\in A\}$ (dually $\downarrow\! A=\{x\in P:x\leq a$ for some $a\in A\}$).
$A^{\uparrow}$ and $A^{\downarrow}$ denote the sets of all upper and lower bounds of $A$ in $P$, respectively. $A^{\uparrow_B}$ and $A^{\downarrow_B}$ denote the sets of all upper and lower bounds of $A$ in $B$, respectively. The set of all ideals in $P$ is denoted by $Id(P)$. Let $A^{\delta}=(A^{\uparrow})^{\downarrow}$, $A^{\delta_B}=(A^{\uparrow_B})^{\downarrow_B}$ and $\delta(P)=\{A^{\delta}: A\subseteq P\}$. $\delta(P)$ is called the $Dedekind$-$Macneille$ completion of $P$. $A^{\delta}$ is called the $cut$ $closure$ of $A$ in $P$. If $A^{\delta}=A$, we say that $A$ is a cut in $P$. A subset $A$ of $P$ is said to be an upper set if $A=\uparrow\! A$.  The Alexandroff topology $(P,up(P))$ on $P$ is the topology consisting of all its upper subsets. The family of all finite sets in $P$ is denoted by $P^{(<\omega)}$. $P$ is said to be a $directed$ $complete$ $poset$, a $dcpo$ for short, if every directed subset of $P$ has the least upper bound in $P$. The topology generated by the collection of sets $P\backslash\uparrow\! x$ (as subbasic open subsets) is called the $lower$ $topology$ on $P$ and denoted by $\omega(P)$; dually define the $upper$ $topology$ on $P$ and denote it by $\upsilon (P)$. The topology $\sigma(P)=\{U\subseteq P: U=\uparrow\! U$ and $U\cap D\neq\emptyset$ for each directed set $D$ with $\vee D\in U\}$ is called the $Scott$ $topology$. A $projection$ $operator$ is an idempotent, monotone self-map $f:P\rightarrow P$. A $closure$ $operator$ is a projection $c$ on $P$ with $1_P\leqslant c$. A $kernel$ $operator$ is a projection $k$ on $P$ with $k\leqslant 1_P$.

Given a topological space $(X,\tau)$, we can define a preorder $\leq_\tau$, called the $preorder$ $of$ $specialization$, by $x\leq_\tau y$ if and only if $x\in \mbox {cl}_\tau\{y\}$. Clearly, each open set is an upper set and each closed set is a lower set with respect to the preorder $\leq_\tau$. It is easy to see that $\leq_\tau$ is a partial order if and only if $(X,\tau)$ is a $T_0$ space. For any set $A\subseteq B \subseteq X$ we denote  $\uparrow_X\! A=\{x\in X: a\leq_\tau x$ for some $a\in A\}$, $\downarrow_X\! A=\{x\in X:x\leq_\tau a$ for some $a\in A\}$, $A^{\uparrow_X}=\{x\in X:a\leq_\tau x$ for all $a\in A\}$, $A^{\downarrow_X}=\{x\in X:x\leq_\tau a$ for all $a\in A\}$, $A^{\uparrow_B}=\{x\in B:a\leq_\tau x$ for all $a\in A\}$, $A^{\downarrow_B}=\{x\in B:x\leq_\tau a$ for all $a\in A\}$, $A^{\delta_X}=(A^{\uparrow_X})^{\downarrow_X}$ and $A^{\delta_B}=(A^{\uparrow_B})^{\downarrow_B}$. For $A=\{x\}$, $\uparrow_X\! A$ and $\downarrow_X\! A$ are shortly denoted by $\uparrow_X\! x$ and $\downarrow_X\! x$ respectively. A set is said to be $saturated$ in $(X,\tau)$ if it is the intersection of open sets, or equivalently if $A$ is an upper set (that is, $A=\uparrow_X\! A)$. $(X, \tau)$ is said to be $sober$ if it is $T_0$ and every irreducible closed set is a closure of a (unique) singleton. $Irr(X)$ denotes the set of all irreducible closed subsets of $(X, \tau)$. We denote by $K(X)$ the poset of nonempty compact saturated subsets of $(X, \tau)$ with the order reverse to containment, i.e., $K_1\leqslant K_2$ iff $K_2\subseteq K_1$. We consider the upper vietoris topology on $K(X)$, generated by the sets $\Box U=\{K\in K(X):K\subseteq U\}$, where $U$ ranges over the open subsets of $(X, \tau)$. The resulting topological space is called $the$ $Smyth$ $power$ $space$ or $the$ $upper$ $power$ $space$ $of$ $(X, \tau)$ and denoted by $P^s(X)$. For any closed subset $A$ of $(X, \tau)$, let $\diamond A=\{Q\in K(X):Q\cap A\neq\emptyset\}$. $(X, \tau)$ is called $well$-$filtered$, $WF$ for short, if for each filter basis $\mathcal{C}$ of compact saturated sets and open set $U$ with $\cap \mathcal{C}\subseteq U$, there is a $K\in \mathcal{C}$ with $K\subseteq U$. A subset $K$ of $(X, \tau)$ is $supercompact$ if for arbitrary families $(U_i)_{i\in I}$ of opens, $K\subseteq \bigcup\limits_{i\in I}U_i$ implies $K\subseteq U_k$ for some $k$ in $I$. Let $Y$ be a subset of $X$. The subspace topology on $Y$ has as opens the subsets of $Y$ of the form $U\cap Y$, $U$ open in $(X, \tau)$. We denote it by $(Y, \tau|Y)$. Let $X$ and $Y$ be two topological spaces. If $f:X\rightarrow Y$ and $g:Y\rightarrow X$ are continuous with $f\circ g=1_Y$, then $Y$ is called a $retract$ of $X$. The map $f$ is a $retraction$. Denote the set of all continuous maps from $X$ into $Y$ by $Top(X,Y)$. Give a point $x\in X$ and an open set $U$ of $Y$, let $S(x, U)=\{f\in Top(X,Y):f(x)\in U\}$, the sets $S(x, U)$ are a subbasis for topology on $Top(X,Y)$, which is called the topology of $pointwise$ $convergence$.
Unless otherwise stated, throughout the paper, whenever an order-thoretic concept is mentioned in the context of a topological space $(X, \tau)$, it is to be interpreted with respect to the specialization preorder on $(X, \tau)$.

Let $P$ be a poset. In the absence of enough (directed) joins, the weak Scott topology $\sigma_2(P)$ is often a good substitute for the classical Scott topology $\sigma(P)$. Note that a set is $\sigma_2$-open if and only if $D\cap U=\emptyset$ implies $D^{\delta}\cap U=\emptyset$ for all directed sets $D$. A space is said to be montone determined iff any subset $U$ is open whenever any montone net converging to a point in $U$ is eventually in $U$ (see \cite{E_2009}). By a cut space, we mean a space in which any montone net converges to each point in the cut closure of its range, or equivalently, the topological closure of any directed subset $D$ coincides with its cut closure $D^{\delta}$.

\begin{remark}[\cite{E_2018}]\label{remark:2.1}
$(1)$ The weak Scott topology  is the coarsest $($weakest$)$ topology on a qoset $P$ making it a montone determined space with specialization qoset $P$;

$(2)$ The cut spaces are exactly those space whose topology is coarser than the weak Scott topology of the specialization qoset;

$(3)$ The montone determined cut spaces are exactly the weak Scott spaces.
\end{remark}

\begin{definition}[\cite{E_2018}]\label{definition:2.2} \rm Let $X$ be a $T_0$ space.
\begin{enumerate}
\item[{(1)}]~$X$ is called $weakly$ $sober$ if every irreducible closed set is a cut, that is, an intersection of point closures (with the whole space as intersection of the empty set);

\item[{(2)}]~$X$ is called $quasisober$ if each irreducible closed set is the cut closure of a directed set.
\end{enumerate}
\end{definition}

\begin{remark}[\cite{E_2018}]\label{remark:2.3}
$(1)$ In a cut space, the cut closure of directed sets is irreducible and closed;

$(2)$ Every sober space is quasisober, every quasisober space is weakly sober, and every weakly sober space is a cut space;

$(3)$ A $T_0$ space is sober iff it is quasisober and directed complete in its specialization order;

\end{remark}

But a weakly sober space which is a dcpo in its specialization order is not always sober (see example 2.6). Indeed, a $T_0$ space is sober iff it is weakly sober and every irreducible (closed) subset has a sup in its specialization order.

\begin{example} [\cite{E_2018}]\label{example:2.4} Adding top $\top$ and bottom $\bot$ to an infinite antichain yields a noetherian lattice $L$ that is a $d$-space, hence a cut space, but not weakly sober in the topology $\upsilon(L)\cup \{\{\top\}\}$, because $L\backslash \{\top\}$ is irreducible and closed but not a cut.
\end{example}

\begin{example} [\cite{E_2018}]\label{example:2.5} The Scott space $\Sigma R$ of the real line $R$ is not sober $($as $R$ is not a dcpo$)$ but quasisober: the irreducible closed sets are the point closures and the whole space.
\end{example}

\begin{example} [\cite{E_2018}]\label{example:2.6} An infinite space $X$ with the cofinite topology is weakly sober but not quasisober: the irreducible closed sets are $X$ and the singletons. Obviously, $X$ isn't sober.
\end{example}

\section{Main results}

By the following example, we know that a closed subspace of a quasisober space (cut space, weakly sober space) is not always a quasisober space (cut space, weakly sober space).

\begin{example}\label{example:3.1} Let $L$=$\{a_1, a_2\}\cup N$ and $L^{'}$=$\{a_2\}\cup N$, where $N$ is the set of all natural numbers $\{1, 2, 3,\cdots, n,$\\$ \cdots\}$, be two posets with the partial order defined by $\forall n\in N, n<a_1, n<a_2, n<n+1$. But $a_1$ and $a_2$ are incomparable. We consider the two Alexandroff topological spaces $(L, up(L))$ and $(L^{'}, up(L^{'}))$. Then $L^{'}$ is a closed set in $(L, up(L))$ and we have that $(L^{'}, up(L^{'}))=(L^{'}, up(L)|L^{'})$. Because $Irr((L, up(L)))=Id(L)=\{\downarrow a: a\in L\}\cup \{N\}$ and $N= N^{\delta_{L}}$ where $N^{\delta_{L}}$ is the cut closure of $N$ in $L$, $(L, up(L))$ is quasisober. Since $N$ is a directed closed set with $N\neq N^{\delta_{L^{'}}}=L^{'}$ where $N^{\delta_{L^{'}}}$ is the cut closure of $N$ in $L^{'}$, $(L^{'}, up(L)|L^{'})$ isn't a cut space.
\end{example}

\begin{proposition}\label{proposition:3.2} A saturated subspace of a cut space is a cut space.
\end{proposition}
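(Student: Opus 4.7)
The plan is to use the second characterization of cut spaces given in the paper: a space is a cut space iff for every directed subset $D$, the topological closure of $D$ equals its cut closure $D^{\delta}$. So given a saturated subspace $Y$ of a cut space $(X,\tau)$, it is enough to verify that $\mbox{cl}_Y(D)=D^{\delta_Y}$ for every directed $D\subseteq Y$, where closure and cut closure are taken inside $(Y,\tau|Y)$.

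First I would note that the specialization order on $(Y,\tau|Y)$ is just the restriction of $\leq_\tau$, and that the subspace closure satisfies $\mbox{cl}_Y(D)=\mbox{cl}_X(D)\cap Y$. Since $D$ is directed in $X$ and $X$ is a cut space, $\mbox{cl}_X(D)=D^{\delta_X}$, so $\mbox{cl}_Y(D)=D^{\delta_X}\cap Y$. The problem thus reduces to proving $D^{\delta_X}\cap Y=D^{\delta_Y}$.

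The key step is to show $D^{\uparrow_X}=D^{\uparrow_Y}$ (as subsets of $X$). Any $x\in D^{\uparrow_X}$ dominates some $d\in D\subseteq Y$; since $Y$ is saturated, i.e.\ $Y=\uparrow_X Y$, this forces $x\in Y$. Hence $D^{\uparrow_X}\subseteq Y$, which, combined with the obvious inclusion $D^{\uparrow_Y}=D^{\uparrow_X}\cap Y\subseteq D^{\uparrow_X}$, yields equality. Once this is established, the computation
\[
D^{\delta_X}\cap Y=(D^{\uparrow_X})^{\downarrow_X}\cap Y=(D^{\uparrow_Y})^{\downarrow_X}\cap Y=(D^{\uparrow_Y})^{\downarrow_Y}=D^{\delta_Y}
\]
is immediate, the penultimate equality being just the fact that a lower bound of a set in $Y$ is exactly a lower bound of that set in $X$ which happens to lie in $Y$.

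There is essentially no obstacle here beyond bookkeeping: the one substantive ingredient is the step $D^{\uparrow_X}\subseteq Y$, which is precisely where the hypothesis that $Y$ is saturated (an upper set) is used, together with the nonemptiness of the directed set $D$. Everything else is formal manipulation of the notations $(\cdot)^{\uparrow_X}$, $(\cdot)^{\downarrow_X}$, $(\cdot)^{\uparrow_Y}$, $(\cdot)^{\downarrow_Y}$ introduced in the preliminaries.
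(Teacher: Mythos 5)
Your proof is correct and follows essentially the same route as the paper's: both reduce the claim to the identity $D^{\uparrow_Y}=D^{\uparrow_X}$ (which holds because $Y$ is a saturated, hence upper, set containing the nonempty directed $D$), deduce $D^{\delta_Y}=D^{\delta_X}\cap Y$, and then use $\mbox{cl}_Y D=\mbox{cl}_X D\cap Y$ together with the cut-space property of $X$. You simply spell out the bookkeeping that the paper leaves implicit.
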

\begin{proof} Let $(X, \mathcal{O}(X))$ be a cut space, $U$ a saturated subset of $X$ and $D$ a directed subset of $U$. As $D^{\uparrow_U}=D^{\uparrow}$, we have that $D^{\delta}\cap U=D^{\delta_U}$ and thus $D^{\delta_U}=D^{\delta}\cap U=\mbox{cl}_XD\cap U=\mbox{cl}_UD$. Therefore, $(U, \mathcal{O}(X)|U)$ is a cut space.
\end{proof}

\begin{proposition}\label{proposition:3.3} A saturated subspace of a weakly sober space is weakly sober.
\end{proposition}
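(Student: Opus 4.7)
My plan is to mimic the cut-closure computation used for Proposition~\ref{proposition:3.2} and transfer the cut identity from the ambient space down to the subspace, using saturation to absorb upper bounds into $U$. Let $(X,\mathcal{O}(X))$ be weakly sober and $U$ a saturated subset of $X$. Since subspaces of $T_0$ spaces are $T_0$, it remains to show that every irreducible closed subset $A$ of $U$ is a cut in $U$. I would write $A=C\cap U$, where $C=\mbox{cl}_X A$, and first verify by the standard decomposition argument (if $C=C_1\cup C_2$ with $C_i$ closed in $X$, then $A=(C_1\cap U)\cup(C_2\cap U)$ forces $A\subseteq C_i$, hence $C\subseteq C_i$, for some $i$) that $C$ is irreducible closed in $X$. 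Weak sobriety of $X$ then yields $C=(C^{\uparrow_X})^{\downarrow_X}$.

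The key reduction is to identify the $U$-relative cut of $A$ with the $X$-relative cut of $C$ intersected with $U$. I would establish two auxiliary identities. First, $A^{\uparrow_X}=C^{\uparrow_X}$: the inclusion $\supseteq$ is immediate from $A\subseteq C$, and for $\subseteq$ note that $x\in A^{\uparrow_X}$ means $A\subseteq\downarrow_X x=\mbox{cl}_X\{x\}$, so $C=\mbox{cl}_X A\subseteq\downarrow_X x$. Second, $C^{\uparrow_X}\subseteq U$: picking any $a_0\in A\subseteq U$, every $x\in C^{\uparrow_X}$ satisfies $a_0\leq_\tau x$, and since $U$ is an upper set this forces $x\in U$. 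Together these give $A^{\uparrow_U}=A^{\uparrow_X}\cap U=C^{\uparrow_X}$, after which
\[
A^{\delta_U}=(C^{\uparrow_X})^{\downarrow_U}=(C^{\uparrow_X})^{\downarrow_X}\cap U=C\cap U=A,
\]
showing that $A$ is a cut in $U$.

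The step I expect to be the main obstacle is the containment $C^{\uparrow_X}\subseteq U$: this is exactly where saturation is used in an essential, non-formal way, and it is precisely the ingredient that fails for closed subspaces (as witnessed by Example~\ref{example:3.1}, where the cut closure of $N$ acquires a point outside the closed subspace). Once this containment is in hand, the rest of the argument is purely symbolic bookkeeping that mirrors the short computation in Proposition~\ref{proposition:3.2}, and does not require any additional hypothesis beyond weak sobriety of $X$.
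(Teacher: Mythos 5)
Your proposal is correct and follows essentially the same route as the paper: pass to $C=\mbox{cl}_XA$, check it is irreducible closed in $X$, apply weak sobriety, and use saturation of $U$ to show the upper bounds of $A$ lie in $U$ so that $A^{\delta_U}=A^{\delta}\cap U$. The only cosmetic difference is that you note $A^{\uparrow_X}=C^{\uparrow_X}$ and get the equality $A^{\delta_U}=C\cap U$ directly, whereas the paper reaches the same conclusion via the monotonicity inclusion $A^{\delta}\subseteq(\mbox{cl}_XA)^{\delta}$.
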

\begin{proof} Let $(X, \mathcal{O}(X))$ be a weakly sober space, $U$ a saturated subset of $(X, \mathcal{O}(X))$ and $A$ an irreducible closed subset of $(U, \mathcal{O}(X)|U)$. Then it is easy to see that $(U, \mathcal{O}(X)|U)$ is a $T_0$ space. Now we prove  that $\mbox{cl}_XA$ is an irreducible closed subset in $(X, \mathcal{O}(X))$. Let $U_1, U_2\in \mathcal{O}(X)$, $U_1\cap \mbox{cl}_XA\neq\emptyset$ and $U_2\cap \mbox{cl}_XA\neq\emptyset$. Then $U_1\cap A=U_1\cap A\cap U\neq\emptyset$ and $U_2\cap A=U_2\cap A\cap U\neq\emptyset$. Since $A$ is an irreducible closed subset in $(U, \mathcal{O}(X)|U)$, we have $U_1\cap U_2\cap U\cap A=U_1\cap U_2\cap A\neq\emptyset$ and thus $U_1\cap U_2\cap\mbox{cl}_XA\neq\emptyset$. Hence, $\mbox{cl}_XA$ is an irreducible closed subset in $(X, \mathcal{O}(X))$. As $(X, \mathcal{O}(X))$ is a weakly sober space, we have $\mbox{cl}_XA=(\mbox{cl}_XA)^{\delta}$ and thus $A=\mbox{cl}_XA\cap U=(\mbox{cl}_XA)^{\delta}\cap U$. Now we show that $A=A^{\delta_U}$. It is easy to see that $A\subseteq A^{\delta_U}$. Conversely, as $A^{\uparrow_U}=A^{\uparrow}$, we have $A^{\delta_U}=A^{\delta}\cap U$ and thus $A^{\delta_U}=A^{\delta}\cap U\subseteq (\mbox{cl}_XA)^{\delta}\cap U=\mbox{cl}_XA\cap U=A$. So $A=A^{\delta_U}$. Therefore, $(U, \mathcal{O}(X)|U)$ is a weakly sober space.
\end{proof}

By the following example, we know that a saturated subspace of a quasisober space is not always quasisober.

\begin{example}\label{example:3.4} Let $L$=$\{b_i: i\in N\}\cup N$, where~$N$ denotes the set of all natural numbers. Define a partial order on $L$ by setting: ~$\forall n\in N$,~$\downarrow n=\{1, 2, 3,\cdots, n\}$ ~$\downarrow b_n=\{b_n\}$. Consider the topological space $(L, \upsilon(L))$. Then we have $Irr((L, \upsilon(L)))=\{\downarrow\! a:a\in L\}\cup\{L\}$ and $L=N^{\delta_L}=\mbox{cl}_{(L, \upsilon(L))}N$. Thus $(L, \upsilon(L))$ is quasisober. Since $L\backslash N=\{b_i: i\in N\}$ is a saturated subset in $(L, \upsilon(L))$ and the topological space $(L\backslash N, \upsilon(L)|L\backslash N)$ is exactly the topological space $(L\backslash N, \upsilon(L\backslash N))$, we have $Irr((L\backslash N, \upsilon(L)|L\backslash N))=\{\{b_i\}:i\in N\}\cup \{L\backslash N\}$. Therefore, $(L\backslash N, \upsilon(L)|L\backslash N)$ isn't quasisober.
\end{example}

\begin{proposition}\label{proposition:3.5} An open subspace of a quasisober space is quasisober.
\end{proposition}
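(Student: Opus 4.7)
The plan is to take an open set $U$ in a quasisober space $(X,\mathcal{O}(X))$ and verify that $(U,\mathcal{O}(X)|U)$ is itself quasisober. As a subspace of a $T_0$ space, $U$ is $T_0$, and since open sets are saturated, Proposition~\ref{proposition:3.2} already guarantees that $U$ is a cut space; in particular the topological closure in $U$ of any directed set coincides with its $U$-cut closure.

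Let $A$ be a non-empty irreducible closed subset of $U$. The first step is to lift $A$ back to $X$: I show that $\mathrm{cl}_X A$ is irreducible closed in $X$ by exactly the argument used in the proof of Proposition~\ref{proposition:3.3} --- if $V_1,V_2\in\mathcal{O}(X)$ both meet $\mathrm{cl}_X A$, then they meet $A$, so the $U$-opens $V_1\cap U$ and $V_2\cap U$ meet $A$, and irreducibility of $A$ in $U$ forces $V_1\cap V_2\cap A\neq\emptyset$. Since $X$ is quasisober, there is a directed $D\subseteq X$ with $\mathrm{cl}_X A=D^{\delta_X}$, and since $X$ is a cut space this equals $\mathrm{cl}_X D$; because $A$ is closed in $U$ one also has $A=\mathrm{cl}_X A\cap U=\mathrm{cl}_X D\cap U$.

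The second step is to transfer $D$ into $U$. Because $\emptyset\neq A\subseteq U\cap \mathrm{cl}_X D$ and $U$ is open, $U$ meets $D$; set $D'=D\cap U$. Since $U$ is an upper set, any two members of $D'$ share an upper bound in $D$ which automatically lies in $U$, so $D'$ is directed. Using that $U$ is saturated --- so that $D'^{\uparrow_U}=D'^{\uparrow_X}$ and hence $D'^{\delta_U}=D'^{\delta_X}\cap U$ --- together with $D'^{\delta_X}=\mathrm{cl}_X D'$ (cut space property of $X$), it remains to check $\mathrm{cl}_X D'\cap U=\mathrm{cl}_X D\cap U$. The inclusion $\subseteq$ is immediate from $D'\subseteq D$; for the reverse, given $x\in\mathrm{cl}_X D\cap U$ and an $X$-open neighbourhood $V$ of $x$, the open set $V\cap U$ contains $x$ and hence meets $D$, so it meets $D'$. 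This yields $A=D'^{\delta_U}$, establishing quasisobriety.

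The main obstacle is precisely this second step: ensuring that a directed witness in $X$ can be cut down to a directed witness in $U$ without losing any of $A$. What makes it succeed is that openness of $U$ supplies both the upper-set property (preserving directedness and yielding $D'^{\delta_U}=D'^{\delta_X}\cap U$) and the classical fact that an open set meeting a closure meets the set, so that replacing $D$ by $D\cap U$ and then closing in $X$ recovers $\mathrm{cl}_X D\cap U$. Neither feature would be available for a merely saturated subspace, which is consistent with the failure exhibited in Example~\ref{example:3.4}.
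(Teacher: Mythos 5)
Your proof is correct and follows essentially the same route as the paper's: lift $A$ to the irreducible closed set $\mathrm{cl}_XA$ of $X$, use quasisobriety of $X$ to get a directed witness $D$ with $\mathrm{cl}_XA=D^{\delta}=\mathrm{cl}_XD$, and then replace $D$ by a directed subset contained in $U$ whose $U$-cut closure is $A$. The only (immaterial) difference is that the paper takes the tail $E=\uparrow\! d\cap D$ for some $d\in D\cap U$ and argues via $E^{\delta}=D^{\delta}$, whereas you take $D\cap U$ and argue via $\mathrm{cl}_XD'\cap U=\mathrm{cl}_XD\cap U$; both sets are cofinal in $D$ and the two verifications are equivalent in substance.
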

\begin{proof}
Let $(X, \mathcal{O}(X))$ be a quasisober space, $U$ an open subset of $(X, \mathcal{O}(X))$ and $A$ an irreducible closed subset of $(U, \mathcal{O}(X)|U)$. Trivially, $(U, \mathcal{O}(X)|U)$ is a $T_0$ space and $\mbox{cl}_XA$ is an irreducible closed subset in $(X, \mathcal{O}(X))$. It is easy to see that $A=\mbox{cl}_XA\cap U$. Since every quasisober space is weakly sober and a saturated subspace of a weakly sober space is weakly sober, we have $A^{\delta_U}=A=\mbox{cl}_XA\cap U$. Since $(X, \mathcal{O}(X))$ is quasisober, there exists a directed  subset $D\subseteq X$ such that $\mbox{cl}_XA=D^{\delta}=\mbox{cl}_XD$. Thus, $A=A^{\delta_U}=\mbox{cl}_XA\cap U=\mbox{cl}_XD\cap U=D^{\delta}\cap U\neq\emptyset$. So there exists $d\in D\cap U$. Let $E=\uparrow\! d\cap D$. Then $E$ is a directed subset in $U$ and we have $E^{\delta}=D^{\delta}=\mbox{cl}_XE$. Thus, we have $A=D^{\delta}\cap U=E^{\delta}\cap U=E^{\delta_U}$. Therefore, $(U, \mathcal{O}(X)|U)$ is quasisober.
\end{proof}

By the following example, we know that products of quasisober spaces (cut spaces, weakly sober spaces) are not always quasisober spaces (cut spaces, weakly sober spaces).

\begin{example}\label{example:3.6} Let $N$ be the set of all natural numbers with their usual ordering. Consider the Alexandroff topological space $(N, up(N))$. Then we have $Irr((N, up(N)))=Id(N)=\{\downarrow n: n\in N\}\cup \{N\}$ and $N=N^{\delta}$. Thus, $(N, up(N))$ is quasisober. Let $D=N\times\{1\}$. Then $D$ is a directed closed subset in $(N\times N, up(N)\times up(N))$. Thus, $D$ is an irreducible closed subset in $(N\times N, up(N)\times up(N))$ and we have $D^{\delta_{N\times N}}=N\times N\neq D$. Therefore, $(N\times N, up(N)\times up(N))$ isn't a cut space.
\end{example}

The next example shows that a retract of a quasisober space (cut space, weakly sober space) may not be a quasisober space (cut space, weakly sober space).

\begin{example}\label{example:3.7} Let $L$ and $L^{'}$ be the two posets in example $\ref{example:3.1}$. Consider the two topological spaces $(L, up(L))$ and $(L^{'}, up(L^{'}))$. Clearly, $(L, up(L))$ is quasisober, while $(L^{'}, up(L^{'}))$ is not a cut space. The mapping $f:(L, up(L))\longrightarrow (L^{'}, up(L^{'}))$ defined by the formula
\begin{equation}
f(x)=
\begin{cases}
a_2 & x\in \{a_1, a_2\}\\
x & x\in N
\end{cases}
\end{equation}
is continuous. The mapping ~$j:(L^{'}, up(L^{'}))\longrightarrow (L, up(L))$ defined by $\forall x\in L^{'}, j(x)=x$ is continuous. Then for any $x\in L^{'}, f\circ j(x)=f(x)=x$, that is, $(L^{'}, up(L^{'}))$ is a retract of $(L, up(L))$.
\end{example}

Next, we will give an example to show that a image subspace of continuous closure operator of a quasisober space (cut space, weakly sober space) may not be a quasisober space (cut space, weakly sober space).

\begin{example}\label{example:3.8}
Let $L=\{\top, a_1, a_2\}\cup N$, where $N$ denotes the set of all natural numbers. Define a partial order on $L$ by setting:\\
~$\downarrow \top=L$,\\
~$\downarrow a_1=\{a_1\}\cup N$,\\
~$\downarrow a_2=\{a_2\}\cup N$,\\
~$\downarrow n=\{1, 2, 3,\cdots, n\}$ ~$(\forall n\in N)$.\\
Consider the topological space $(L, up(L))$. Define the mapping $p:(L, up(L))\longrightarrow (L, up(L))$ by the formula
\begin{equation}
p(x)=
\begin{cases}
\top & x\in \{\top, a_1, a_2\}\\
x & x\in N
\end{cases}
\end{equation}. Clearly, $p$ is continuous. It is easy to see that $p$ is a closure operator and we have $p(L)=\{\top\}\cup N$. Let $L^{'}=p(L)=\{\top\}\cup N$. Then we have $(L^{'}, up(L)|L^{'})=(L^{'}, up(L^{'}))$. Since $Irr((L, up(L)))=Id(L)=\{\downarrow a:a\in L\}\cup \{N\}$ and $N=N^{\delta}$, $(L, up(L))$ is quasisober. Since $N$ is an irreducible closed subset in $(L^{'}, up(L^{'}))$ and since $\mbox{cl}_{(L^{'}, up(L^{'}))}N=N\neq N^{\delta_{L^{'}}}=N\cup \{\top\}$, $(L^{'}, up(L)|L^{'})$ is not a cut space.
\end{example}

\begin{proposition}\label{proposition:3.9} If $(X,\mathcal{O}(X))$ is a cut space and a continuous mapping $p:(X,\mathcal{O}(X))\longrightarrow (X,\mathcal{O}(X))$ is a kernel operator with respect to the specialization preorder of $(X,\mathcal{O}(X))$, then the subspace $(p(X), \mathcal{O}(X)|p(X))$ is a cut space.
\end{proposition}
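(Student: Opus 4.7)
The plan is to verify the cut-space condition directly: for every directed subset $D$ of $p(X)$, one must show that $\mathrm{cl}_{p(X)}D=D^{\delta_{p(X)}}$. Since the subspace closure satisfies $\mathrm{cl}_{p(X)}D=\mathrm{cl}_X D\cap p(X)$ and $X$ is assumed to be a cut space, the topological side simplifies to $D^{\delta_X}\cap p(X)$. Hence the whole task reduces to the purely order-theoretic identity
\[
D^{\delta_X}\cap p(X)\;=\;D^{\delta_{p(X)}}.
\]

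To handle this identity, I would first record the elementary observations that for any $A\subseteq p(X)$ one has $A^{\uparrow_{p(X)}}=A^{\uparrow_X}\cap p(X)$ and $A^{\downarrow_{p(X)}}=A^{\downarrow_X}\cap p(X)$, directly from the definitions (here $p(X)$ is viewed with the induced order, which coincides with the specialization order restricted to fixed points of $p$). The inclusion $D^{\delta_X}\cap p(X)\subseteq D^{\delta_{p(X)}}$ is then immediate: an element lying below every upper bound of $D$ in $X$ lies, a fortiori, below every upper bound of $D$ inside $p(X)$.

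The reverse inclusion is the real content of the proposition and is where the kernel hypothesis is used. Take $y\in D^{\delta_{p(X)}}$ and an arbitrary $z\in D^{\uparrow_X}$; I need $y\le z$. Apply $p$: for every $d\in D\subseteq p(X)$ we have $d=p(d)\le p(z)$ by monotonicity, so $p(z)\in D^{\uparrow_X}\cap p(X)=D^{\uparrow_{p(X)}}$. By the defining property of $y$, this gives $y\le p(z)$, and finally $p(z)\le z$ because $p$ is a kernel operator. Hence $y\le z$, finishing the proof that $D^{\delta_{p(X)}}\subseteq D^{\delta_X}\cap p(X)$.

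The main obstacle, as expected, is the reverse inclusion, and the argument pinpoints exactly where the kernel condition $p\le 1_X$ is essential: it is what allows an arbitrary $X$-upper bound $z$ of $D$ to be replaced by the $p(X)$-upper bound $p(z)$ while preserving the inequality $y\le z$. If $p$ were only a closure operator, the inequality $p(z)\ge z$ would run the wrong way and the argument would collapse, which is consistent with Example~\ref{example:3.8} showing that the analogue for closure operators fails. Aside from this observation, continuity of $p$ is not needed for the order-theoretic calculation itself but plays its usual role in guaranteeing that $p(X)$ is well-behaved as a topological subspace.
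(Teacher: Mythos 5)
Your proof is correct and follows essentially the same route as the paper: both arguments reduce the claim to the identity $D^{\delta}\cap p(X)=D^{\delta_{p(X)}}$, established via the same key observation that any upper bound $z$ of $D$ in $X$ dominates the upper bound $p(z)\in D^{\uparrow_{p(X)}}$ because $p$ is a monotone kernel operator fixing $D$ pointwise. The paper packages this as the set identity $\uparrow\!(D^{\uparrow_{p(X)}})=D^{\uparrow}$, but the content is identical.
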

\begin{proof}
Suppose that $D\subseteq p(X)$ is a directed subset in $(X,\mathcal{O}(X))$. Now we claim that $\uparrow\! (D^{\uparrow_{p(X)}})=D^{\uparrow}$. Clearly, $\uparrow\! (D^{\uparrow_{p(X)}})\subseteq D^{\uparrow}$. Conversely, since continuous mapping $p$ is a kernel operator, for any $x\in D^{\uparrow}$ we have $p(x)\leqslant x$ and $p(x)\in D^{\uparrow_{p(X)}}$. Thus $x\in \uparrow\!(D^{\uparrow_{p(X)}})$. Hence, $\uparrow\!(D^{\uparrow_{p(X)}})\supseteq D^{\uparrow}$. So $\uparrow\!(D^{\uparrow_{p(X)}})=D^{\uparrow}$. Thus, $D^{\delta_{p(X)}}=(\uparrow\!(D^{\uparrow_{p(X)}}))^{\downarrow_{p(X)}}=D^{\delta}\cap p(X)$. As $(X,\mathcal{O}(X))$ is a cut space, we have $D^{\delta_{p(X)}}=D^{\delta}\cap p(X)=\mbox{cl}_XD\cap p(X)=\mbox{cl}_{p(X)}D$. Therefore, $(p(X), \mathcal{O}(X)|p(X))$ is a cut space.
\end{proof}

\begin{proposition}\label{proposition:3.10} If $(X,\mathcal{O}(X))$ is a weakly sober space and a continuous mapping $p:(X,\mathcal{O}(X))\longrightarrow (X,\mathcal{O}(X))$ is a kernel operator with respect to the specialization preorder of $(X,\mathcal{O}(X))$, then the subspace $(p(X), \mathcal{O}(X)|p(X))$ is a weakly sober space.
\end{proposition}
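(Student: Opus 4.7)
The plan is to mirror the argument in Proposition~\ref{proposition:3.3} (where saturatedness was used) and Proposition~\ref{proposition:3.9} (where the kernel-operator identity for directed sets appeared), combining them to handle arbitrary irreducible closed subsets of the image $p(X)$. The key algebraic identity to establish first is that for \emph{every} subset $A\subseteq p(X)$ one has $A^{\delta_{p(X)}}=A^{\delta}\cap p(X)$; once this is in hand, the rest of the proof is a lifting step followed by invocation of weak sobriety in $X$.

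First I would prove the auxiliary identity $\uparrow\!(A^{\uparrow_{p(X)}})=A^{\uparrow}$ for every $A\subseteq p(X)$. The inclusion $\subseteq$ is immediate. For $\supseteq$, given $x\in A^{\uparrow}$ and any $a\in A$, the fact that $a\in p(X)$ (so $p(a)=a$) together with monotonicity of $p$ yields $a=p(a)\leq p(x)$, hence $p(x)\in A^{\uparrow_{p(X)}}$; combined with $p(x)\leq x$ this gives $x\in\uparrow\!(A^{\uparrow_{p(X)}})$. From this, the same double-inclusion argument as in Proposition~\ref{proposition:3.9} (using the elementary fact that $(\uparrow\!S)^{\downarrow}=S^{\downarrow}$ for any $S$) delivers $A^{\delta_{p(X)}}=A^{\delta}\cap p(X)$.

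Next, let $A$ be an irreducible closed subset of $(p(X),\mathcal{O}(X)|p(X))$. The subspace is $T_0$ automatically. Exactly as in Proposition~\ref{proposition:3.3}, I would show that $\mbox{cl}_X A$ is irreducible closed in $X$ by the standard two-open-set test, then invoke weak sobriety of $X$ to conclude $\mbox{cl}_X A=(\mbox{cl}_X A)^{\delta}$. Since $A$ is closed in the subspace, $A=\mbox{cl}_X A\cap p(X)$. Combining everything,
\[A^{\delta_{p(X)}}=A^{\delta}\cap p(X)\subseteq(\mbox{cl}_X A)^{\delta}\cap p(X)=\mbox{cl}_X A\cap p(X)=A,\]
and the reverse inclusion $A\subseteq A^{\delta_{p(X)}}$ is trivial, so $A=A^{\delta_{p(X)}}$ and the subspace is weakly sober.

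The main obstacle is precisely the first identity. In Proposition~\ref{proposition:3.3} the subset $U$ was assumed saturated, which forces $A^{\uparrow_U}=A^{\uparrow}$ for free; here $p(X)$ need \emph{not} be an upper set, so one genuinely needs the kernel-operator property $p(x)\leq x$ to ``pull down'' any upper bound in $X$ into an upper bound sitting inside $p(X)$ without losing the cut below. Once that step is secured, the rest is a direct transcription of the weakly sober argument for saturated subspaces.
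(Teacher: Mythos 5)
Your proposal is correct and follows essentially the same route as the paper's own proof: both establish $\uparrow\!(A^{\uparrow_{p(X)}})=A^{\uparrow}$ via the kernel-operator inequality $p(x)\leq x$ together with idempotence and monotonicity, deduce $A^{\delta_{p(X)}}=A^{\delta}\cap p(X)$, and then close the argument by lifting $A$ to the irreducible closed set $\mbox{cl}_XA$ and applying weak sobriety of $X$. The only difference is that you spell out in slightly more detail why $p(x)\in A^{\uparrow_{p(X)}}$, which the paper leaves implicit.
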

\begin{proof}
Clearly, $(p(X), \mathcal{O}(X)|p(X))$ is a $T_0$ space. Suppose that $A\subseteq p(X)$ is an irreducible closed subset in $(p(X), \mathcal{O}(X)|p(X))$. Then $\mbox{cl}_XA$ is an irreducible closed subset in $(X,\mathcal{O}(X))$. As $(X,\mathcal{O}(X))$ is a weakly sober space, we have $\mbox{cl}_XA=(\mbox{cl}_XA)^{\delta}$. Now we claim that $\uparrow\!(A^{\uparrow_{p(X)}})=A^{\uparrow}$. Clearly, $\uparrow\!(A^{\uparrow_{p(X)}})\subseteq A^{\uparrow}$. Conversely, since continuous mapping $p$ is a kernel operator, for any $x\in A^{\uparrow}$ we have $p(x)\leqslant x$ and $p(x)\in A^{\uparrow_{p(X)}}$. Thus $x\in \uparrow\!(A^{\uparrow_{p(X)}})$. Hence $\uparrow\!(A^{\uparrow_{p(X)}})\supseteq A^{\uparrow}$. So $\uparrow\!(A^{\uparrow_{p(X)}})=A^{\uparrow}$. Thus, $A^{\delta_{p(X)}}=(\uparrow\!(A^{\uparrow_{p(X)}}))^{\downarrow_{p(X)}}=A^{\delta}\cap p(X)$. As $A^{\delta_{p(X)}}\supseteq A=\mbox{cl}_XA\cap p(X)=(\mbox{cl}_XA)^{\delta}\cap p(X)\supseteq A^{\delta}\cap p(X)=A^{\delta_{p(X)}}$, we have $A^{\delta_{p(X)}}=A$. Therefore, $(p(X),\mathcal{O}(X)|p(X))$ is a weakly sober space.
\end{proof}

\begin{proposition}\label{proposition:3.11} If $(X,\mathcal{O}(X))$ is a quasisober space and a continuous mapping $p:(X,\mathcal{O}(X))\longrightarrow (X,\mathcal{O}(X))$ is a kernel operator with respect to the specialization preorder of $(X,\mathcal{O}(X))$, then the subspace $(p(X), \mathcal{O}(X)|p(X))$ is a quasisober space.
\end{proposition}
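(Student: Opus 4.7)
My plan is to adapt the strategy of Propositions \ref{proposition:3.9} and \ref{proposition:3.10} and supply the one extra ingredient the quasisober case needs. Let $A$ be an irreducible closed subset of $(p(X),\mathcal{O}(X)|p(X))$. Then $\mbox{cl}_X A$ is irreducible and closed in $(X,\mathcal{O}(X))$ (same argument as in Proposition \ref{proposition:3.10}), and $A=\mbox{cl}_X A\cap p(X)$ by the definition of the subspace topology. Since $(X,\mathcal{O}(X))$ is quasisober, choose a directed $D\subseteq X$ with $\mbox{cl}_X A=D^{\delta}=\mbox{cl}_X D$, the second equality holding because every quasisober space is a cut space. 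The natural directed candidate inside $p(X)$ is $E:=p(D)$: monotonicity of $p$ makes $E$ directed, and $p(d)\leqslant d\in\mbox{cl}_X A$ together with $\mbox{cl}_X A$ being a lower set gives $E\subseteq\mbox{cl}_X A\cap p(X)=A$.

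By Proposition \ref{proposition:3.9}, $(p(X),\mathcal{O}(X)|p(X))$ is a cut space, so $E^{\delta_{p(X)}}=\mbox{cl}_{p(X)}E=\mbox{cl}_X E\cap p(X)=\mbox{cl}_X p(D)\cap p(X)$. Hence the whole proof reduces to establishing the identity
\[
\mbox{cl}_X D\cap p(X)=\mbox{cl}_X p(D)\cap p(X),
\]
after which $A=\mbox{cl}_X A\cap p(X)=\mbox{cl}_X D\cap p(X)=\mbox{cl}_X p(D)\cap p(X)=E^{\delta_{p(X)}}$ exhibits $A$ as the cut closure in $p(X)$ of a directed set. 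The inclusion $\supseteq$ is immediate from $p(D)\subseteq\mbox{cl}_X D$, since $p(d)\leqslant d\in D$ and closed sets are lower sets in the specialization order.

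The main obstacle is the reverse inclusion, because a direct order-theoretic attempt only produces $D^{\uparrow}\subseteq p(D)^{\uparrow}$ and hence $p(D)^{\delta}\subseteq D^{\delta}$, which points in the wrong direction. The resolution is topological and uses continuity of $p$ together with the fact that every $y\in p(X)$ is a fixed point of $p$: given $y\in\mbox{cl}_X D\cap p(X)$ and an arbitrary open neighbourhood $U$ of $y$, continuity makes $p^{-1}(U)$ open, and $p(y)=y\in U$ places $y$ inside it; therefore $p^{-1}(U)\cap D\neq\emptyset$, so some $d\in D$ satisfies $p(d)\in U\cap p(D)$, whence $y\in\mbox{cl}_X p(D)$. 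Chaining the equalities then yields $A=E^{\delta_{p(X)}}$ with $E$ directed in $p(X)$, so $(p(X),\mathcal{O}(X)|p(X))$ is quasisober.
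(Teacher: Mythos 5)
Your proof is correct and follows essentially the same route as the paper's: both take $E=p(D)$ as the directed witness for $A$, and both rest on the same two facts, namely $p\leqslant 1_X$ (which gives $p(D)\subseteq\mbox{cl}_XD$, hence one containment) and continuity plus idempotence of $p$ (which gives the reverse one). The only difference is bookkeeping: the paper works with cut closures throughout, first proving $\uparrow\!((p(D))^{\uparrow_{p(X)}})=(p(D))^{\uparrow}$ and hence $(p(D))^{\delta_{p(X)}}=(p(D))^{\delta}\cap p(X)$, and then closing the cycle $(p(D))^{\delta}\cap p(X)\subseteq D^{\delta}\cap p(X)\subseteq p(D^{\delta})\subseteq (p(D))^{\delta}\cap p(X)$, whereas you invoke Proposition \ref{proposition:3.9} once to replace $E^{\delta_{p(X)}}$ by the topological closure $\mbox{cl}_{p(X)}E$ and then verify $\mbox{cl}_XD\cap p(X)=\mbox{cl}_Xp(D)\cap p(X)$ directly with neighbourhoods; this avoids repeating the upper-bound computation already carried out in Propositions \ref{proposition:3.9} and \ref{proposition:3.10}, but the mathematical content is the same.
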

\begin{proof}
Clearly, $(p(X), \mathcal{O}(X)|p(X))$ is a $T_0$ space. Suppose that $A\subseteq p(X)$ is an irreducible closed subset in $(p(X), \mathcal{O}(X)|p(X))$. Then $\mbox{cl}_XA$ is an irreducible closed subset in $(X,\mathcal{O}(X))$.  As $(X,\mathcal{O}(X))$ is a quasisober space, there is a directed subset $D\subseteq X$ such that $\mbox{cl}_XA=\mbox{cl}_XD=D^{\delta}$. Now we show that $\uparrow\!((p(D))^{\uparrow_{p(X)}})=(p(D))^{\uparrow}$. Clearly, $\uparrow\!((p(D))^{\uparrow_{p(X)}})\subseteq (p(D))^{\uparrow}$. Conversely, since continuous mapping $p$ is a kernel operator, for any $x\in (p(D))^{\uparrow}$ we have $p(x)\leqslant x$ and $p(x)\in (p(D))^{\uparrow_{p(X)}}$. Thus $x\in \uparrow\!((p(D))^{\uparrow_{p(X)}})$. Hence, $\uparrow\!((p(D))^{\uparrow_{p(X)}})\supseteq (p(D))^{\uparrow}$. So $\uparrow\!((p(D))^{\uparrow_{p(X)}})=(p(D))^{\uparrow}$. Thus, $(p(D))^{\delta_{p(X)}}=(\uparrow\!((p(D))^{\uparrow_{p(X)}}))^{\downarrow_{p(X)}}=(p(D))^{\delta}\cap p(X)$. Now prove that $(p(D))^{\delta}\cap p(X)\subseteq D^{\delta}\cap p(X)$. As $p$ is a kernel operator, we have $p(D)\subseteq \downarrow\!D$ and thus $(p(D))^{\delta}\subseteq (\downarrow\!D)^{\delta}=D^{\delta}$. Hence, $(p(D))^{\delta}\cap p(X)\subseteq D^{\delta}\cap p(X)$. Now we show that $D^{\delta}\cap p(X)\subseteq p(D^{\delta})$. Since for any $x\in D^{\delta}\cap p(X)$ we have $x=p(x)\in p(D^{\delta})$, it is easy to see that $D^{\delta}\cap p(X)\subseteq p(D^{\delta})$. Now we show that $p(D^{\delta})\subseteq (p(D))^{\delta}\cap p(X)$. Since $p$ is continuous and $(X,\mathcal{O}(X))$ is quasisober, we have $p(D^{\delta})=p(\mbox{cl}_XD)\subseteq \mbox{cl}_Xp(D)\subseteq (p(D))^{\delta}$ and thus $p(D^{\delta})\subseteq (p(D))^{\delta}\cap p(X)$. Hence, $(p(D))^{\delta}\cap p(X)\subseteq D^{\delta}\cap p(X)\subseteq p(D^{\delta})\subseteq (p(D))^{\delta}\cap p(X)$. Therefore, $D^{\delta}\cap p(X)=(p(D))^{\delta}\cap p(X)$. Since $A=\mbox{cl}_XA\cap p(X)=\mbox{cl}_XD\cap p(X)=D^{\delta}\cap p(X)=(p(D))^{\delta}\cap p(X)=(p(D))^{\delta_{p(X)}}$, we have $A=(p(D))^{\delta_{p(X)}}$ and  $p(D)$ is a directed subset in $p(X)$. Therefore,  $(p(X), \mathcal{O}(X)|p(X))$ is quasisober.
\end{proof}

Next, we will give an example to show that even if $Y$ is a quasisober space (cut space, weakly space), then the set $Top(X, Y)$ of all continuous functions $f:X\rightarrow Y$ equipped with the topology of pointwise convergence may not be a quasisober space (cut space, weakly space).

\begin{example}\label{example:3.12}
Let $L$ be the poset in example $\ref{example:3.1}$. Clearly, $(L, up(L))$ is quasisober. We denote by $[L\rightarrow L]$ the set of all order preserving functions from $L$ into $L$. For any $i\in N$, define the mapping $f_i:(L, up(L))\longrightarrow (L, up(L))$ by the formula
\begin{equation}
f_i(x)=
\begin{cases}
a_2 & x\in L\backslash \{1\}\\
i & x=1
\end{cases}
\end{equation}.
Let $D=\{f_i:i\in N\}$. Then $D$ is a directed subset in $[L\rightarrow L]$ under the pointwise partial order. Now we prove that $D^{\delta_{[L\rightarrow L]}}=\downarrow_{[L\rightarrow L]}g$, where $g(x)=a_2$ for any $x\in L$. One readily sees that $g$ is an upper bound of $D$ in $[L\rightarrow L]$. Let $g^{'}\in [L\rightarrow L]$ be another upper bound of $D$ in $[L\rightarrow L]$, that is, $g^{'}\in D^{\uparrow_{[L\rightarrow L]}}$. Then we have $g^{'}(1)\in \bigcap\limits_{n\in N}\uparrow f_n(1)=\bigcap\limits_{n\in N}\uparrow n=\{a_1, a_2\}$. Since $g^{'}(a_2)\in \{f_i(a_2):i\in N\}^{\uparrow}=\{a_2\}^{\uparrow}=\{a_2\}$, it follows that $g^{'}(a_2)=a_2\geqslant g^{'}(1)$ and thus we have $g^{'}(1)=a_2$. Hence, for any $x\in L$ we have $g^{'}(x)=a_2$, that is, $g^{'}=g$. So $\bigvee_{[L\rightarrow L]}D=g$. Therefore, it follows that $D^{\delta_{[L\rightarrow L]}}=\downarrow_{[L\rightarrow L]}g$. Now we claim that $\downarrow_{[L\rightarrow L]}D$ is a closed subset in $[L\rightarrow L]$ with respect to the topology of pointwise convergence. Let $f\in [L\rightarrow L]$. We consider four cases.\\
Case $1$. $f(a_1)\in N$, ~$f(a_2)\in N$.\\
Clearly, it follows that $f\in \downarrow_{[L\rightarrow L]}D$.\\
Case $2$. $f(a_1)\in N$, ~$f(a_2)\notin N$.\\
If $f(a_2)=a_2$, it is easy to see that $f\in \downarrow_{[L\rightarrow L]}D$.\\
If $f(a_2)=a_1$, then we have $f\in S(a_2, \{a_1\})$ and $S(a_2, \{a_1\})\cap \downarrow_{[L\rightarrow L]}D=\emptyset$, where $S(a_2, \{a_1\})$ denotes the set $\{f\in [L\rightarrow L]:f(a_2)\in \{a_1\}\}$.\\
Case $3$. $f(a_1)\notin N$, ~$f(a_2)\in N$.\\
If $f(a_1)=a_2$, then $f\in \downarrow_{[L\rightarrow L]}D$.\\
If $f(a_1)=a_1$, then $f\in S(a_1, \{a_1\})$ and $S(a_1, \{a_1\})\cap \downarrow_{[L\rightarrow L]}D=\emptyset$.\\
Case $4$. $f(a_1)\notin N$, ~$f(a_2)\notin N$.\\
If $f(a_1)=a_1$ and $f(a_2)=a_1$, then $f\in S(a_1, \{a_1\})\cap S(a_2, \{a_1\})$ and $S(a_1, \{a_1\})\cap S(a_2, \{a_1\})\cap \downarrow_{[L\rightarrow L]}D=\emptyset$.\\
If $f(a_1)=a_2$, $f(a_2)=a_2$ and $f(1)=a_2$, then $f\in S(1, \{a_2\})$ and $S(1, \{a_2\})\cap \downarrow_{[L\rightarrow L]}D=\emptyset$.\\
If $f(a_1)=a_2$, $f(a_2)=a_2$ and $f(1)\in N$, then it follows that $f\in \downarrow_{[L\rightarrow L]}D$.\\
If $f(a_1)=a_1$ and $f(a_2)=a_2$, then $f\in S(a_1, \{a_1\})\cap S(a_2, \{a_2\})$ and $S(a_1, \{a_1\})\cap S(a_2, \{a_2\})\cap \downarrow_{[L\rightarrow L]}D=\emptyset$.\\
If $f(a_1)=a_2$ and $f(a_2)=a_1$, then $f\in S(a_1, \{a_2\})\cap S(a_2, \{a_1\})$ and $S(a_1, \{a_2\})\cap S(a_2, \{a_1\})\cap \downarrow_{[L\rightarrow L]}D=\emptyset$.

All these together show that $\downarrow_{[L\rightarrow L]}D$ is a closed subset in $[L\rightarrow L]$ with respect to the topology of pointwise convergence. As $D^{\delta_{[L\rightarrow L]}}=\downarrow_{[L\rightarrow L]}g\neq \downarrow_{[L\rightarrow L]}D$, it follows that the topological space of pointwise convergence of $[L\rightarrow L]$ is not a cut space.
\end{example}

By the following example, the Smyth power space $P^s(X)$ of a quasisober space (cut space, weakly sober space) may not be a quasisober space (cut space, weakly sober space).

\begin{example}\label{example:3.13}
Let $L$ be the poset in example $\ref{example:3.1}$. Clearly, $(L, up(L))$ is quasisober. All nonempty compact saturated subsets $K(L)$ of $(L, up(L))$ are exactly $\{\uparrow a:a\in L\}\cup \{\{a_1, a_2\}\}$. Now we prove that $P^s((L,  up(L)))=(K(L),  up(K(L)))$. Clearly, we have $P^s((L,  up(L)))\subseteq (K(L),  up(K(L)))$. Conversely, suppose that $Q\in K(L)$. Since $Q$ is an open subset in $(L, up(L))$, it follows that $\uparrow_{K(L)}Q=\Box Q\in \mathcal{O}(P^s((L,  up(L))))$ and thus $P^s((L,  up(L)))\supseteq (K(L),  up(K(L)))$. Hence, $P^s((L,  up(L)))=(K(L),  up(K(L)))$. Let $D=\{\uparrow a:a\in N\}$. It is easy to see that $D=\{\uparrow a:a\in N\}$ is a directed lower subset of $K(L)$. Then $D$ is a directed closed  subset of $P^s((L,  up(L)))$. As $D^{\delta_{K(L)}}=D\cup \{\{a_1, a_2\}\}$, we have $D=\mbox{cl}_{P^s((L,  up(L)))}D\neq D^{\delta_{K(L)}}=D\cup \{\{a_1, a_2\}\}$. Therefore, $P^s((L,  up(L)))$ is not a cut space.
\end{example}

\begin{theorem}\label{theorem:3.14}
Let $(X, \mathcal{O}(X))$ be a topological space, and suppose that $P^s((X, \mathcal{O}(X)))$ is a cut space. Then $(X, \mathcal{O}(X))$ is a cut space.
\end{theorem}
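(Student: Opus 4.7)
The plan is to lift a directed subset of $X$ to a directed subset of $K(X)$ and then transfer the cut-space information back from $P^s(X)$. By the definition of cut space recalled just before Remark~\ref{remark:2.1}, it suffices to show $\mbox{cl}_X D = D^{\delta_X}$ for every directed subset $D \subseteq X$. The inclusion $\mbox{cl}_X D \subseteq D^{\delta_X}$ is automatic and needs no hypothesis: for each $u \in D^{\uparrow_X}$ the closed set $\mbox{cl}_X\{u\} = \downarrow_X\! u$ contains $D$, hence contains $\mbox{cl}_X D$, so every point of $\mbox{cl}_X D$ lies below $u$ in the specialization preorder; letting $u$ range over $D^{\uparrow_X}$ gives the claim.

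For the reverse inclusion fix $x \in D^{\delta_X}$ and introduce the lift $\widetilde{D} = \{\uparrow_X\! d : d \in D\}$. Each $\uparrow_X\! d$ is saturated, and it is supercompact because any open set containing $d$ is automatically a superset of $\uparrow_X\! d$ (open sets are upper); hence $\widetilde{D} \subseteq K(X)$, and since $K(X)$ is ordered by reverse inclusion, $\widetilde{D}$ inherits directedness from $D$. I claim $\uparrow_X\! x \in \widetilde{D}^{\delta_{K(X)}}$. Indeed, the upper bounds of $\widetilde{D}$ in $K(X)$ are precisely the $K \in K(X)$ with $K \subseteq \bigcap_{d \in D}\uparrow_X\! d = D^{\uparrow_X}$; since $x \in D^{\delta_X}$ forces $D^{\uparrow_X} \subseteq \uparrow_X\! x$, any such $K$ satisfies $K \subseteq \uparrow_X\! x$, that is, $\uparrow_X\! x \leqslant K$ in $K(X)$. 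Thus $\uparrow_X\! x$ is a lower bound of $\widetilde{D}^{\uparrow_{K(X)}}$ in $K(X)$, so $\uparrow_X\! x \in \widetilde{D}^{\delta_{K(X)}}$.

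By the hypothesis that $P^s(X)$ is a cut space, $\widetilde{D}^{\delta_{K(X)}} = \mbox{cl}_{P^s(X)} \widetilde{D}$, so $\uparrow_X\! x \in \mbox{cl}_{P^s(X)} \widetilde{D}$. Given any open $U \subseteq X$ containing $x$, upperness of $U$ gives $\uparrow_X\! x \subseteq U$, i.e.\ $\uparrow_X\! x \in \Box U$; hence $\Box U$ meets $\widetilde{D}$, which supplies $d \in D$ with $\uparrow_X\! d \subseteq U$ and in particular $d \in U$. Therefore every open neighborhood of $x$ meets $D$, which gives $x \in \mbox{cl}_X D$, completing the argument.

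The only step requiring real care is the computation of $\widetilde{D}^{\delta_{K(X)}}$: one must track the reversal of the order on $K(X)$ carefully and recall that cuts there only involve compact-saturated bounds, which is precisely why lifting $x$ to the compact saturated set $\uparrow_X\! x$ (rather than trying to use $x$ directly) is the correct move. Once that lifting is in place, the rest is essentially a dictionary between neighborhoods of $x$ in $X$ and basic opens $\Box U$ around $\uparrow_X\! x$ in $P^s(X)$.
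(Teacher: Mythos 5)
Your proof is correct and follows essentially the same route as the paper's: both lift $D$ to the directed family $\{\uparrow_X\! d : d\in D\}$ in $K(X)$, show $\uparrow_X\! x$ lies in its cut closure there, invoke the cut-space hypothesis on $P^s(X)$ to place $\uparrow_X\! x$ in the topological closure, and translate $\Box U$-neighborhoods back into neighborhoods of $x$ meeting $D$. The only differences are cosmetic (you spell out the easy inclusion $\mbox{cl}_X D\subseteq D^{\delta_X}$ and the supercompactness of $\uparrow_X\! d$, which the paper takes for granted).
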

\begin{proof}
Assume that $D$ is a directed subset of $X$ with respect to the specialization preorder of $(X, \mathcal{O}(X))$. Then $\{\uparrow d: d\in D\}$ is a directed subset of $K(X)$. Now we claim that $\mbox{cl}_{X}D=D^{\delta}$. Clearly, $\mbox{cl}_{X}D\subseteq D^{\delta}$. Conversely, let $x\in D^{\delta}$. Then $\bigcap\limits_{d\in D}\uparrow\! d\subseteq\uparrow\! x $. Thus for each $Q\in \{\uparrow d: d\in D\}^{\uparrow_{K(X)}}$ we have $Q\subseteq \bigcap\limits_{d\in D}\uparrow\! d\subseteq\uparrow\! x $. So $\uparrow\! x\in \{\uparrow d: d\in D\}^{\delta_{K(X)}}$. Assume that $U\in \mathcal{O}(X)$  and $x\in U$. As $P^s((X, \mathcal{O}(X)))$ is a cut space, it follows that $\uparrow\! x\in \Box U\cap \{\uparrow d: d\in D\}^{\delta_{K(X)}}=\Box U\cap \mbox{cl}_{P^s((X, \mathcal{O}(X)))}\{\uparrow d: d\in D\}$ and thus $\Box U\cap \{\uparrow d: d\in D\}\neq \emptyset$. So $U\cap D\neq \emptyset$. Thus $x\in \mbox{cl}_{X}D$. Hence $\mbox{cl}_{X}D\supseteq D^{\delta}$. So $\mbox{cl}_{X}D=D^{\delta}$. Therefore, $(X, \mathcal{O}(X))$ is a cut space.
\end{proof}

\begin{theorem}\label{theorem:3.15}
Let $(X, \mathcal{O}(X))$ be a $T_0$ space, and suppose that $P^s((X, \mathcal{O}(X)))$ is weakly sober. Then $(X, \mathcal{O}(X))$ is weakly sober.
\end{theorem}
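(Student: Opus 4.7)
The plan is to mimic the strategy of Theorem~\ref{theorem:3.14}: given an irreducible closed subset $A$ of $X$, I would build a corresponding irreducible closed subset of $P^s(X)$, apply the weak sobriety of $P^s(X)$, and pull the resulting equality back to $X$. The natural candidate is the ``diamond'' set $\diamond A=\{Q\in K(X): Q\cap A\neq\emptyset\}$. First I would verify that $\diamond A$ is closed in $P^s(X)$, noting that its complement is $\Box(X\setminus A)$, and that it is irreducible: for basic opens $\Box U_1,\Box U_2$ meeting $\diamond A$, each intersection $U_i\cap A$ is nonempty, so the irreducibility of $A$ produces some $a\in U_1\cap U_2\cap A$, and then $\uparrow\! a\in \Box(U_1\cap U_2)\cap \diamond A$. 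Since $P^s(X)$ is weakly sober, this yields $\diamond A=(\diamond A)^{\delta_{K(X)}}$.

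Next I would unwind $(\diamond A)^{\delta_{K(X)}}$, keeping in mind that the order on $K(X)$ is reverse inclusion. A compact saturated set $Q$ is an upper bound of $\diamond A$ in $K(X)$ iff $Q\subseteq K$ for every $K\in \diamond A$. Testing against the elements $\uparrow\! a$ with $a\in A$ (all of which lie in $\diamond A$) forces $Q\subseteq \bigcap_{a\in A}\uparrow\! a = A^{\uparrow}$; conversely, if $Q\subseteq A^{\uparrow}$ and $K\in \diamond A$ picks some $a\in K\cap A$, then $Q\subseteq \uparrow\! a\subseteq K$ since $K$ is saturated. So $(\diamond A)^{\uparrow_{K(X)}}=\{Q\in K(X):Q\subseteq A^{\uparrow}\}$; then, taking lower bounds (again with the reversed order) and testing against the singletons $\{u\}$ with $u\in A^{\uparrow}$, one obtains the clean characterization $(\diamond A)^{\delta_{K(X)}}=\{K\in K(X): A^{\uparrow}\subseteq K\}$.

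To conclude, take any $x\in A^{\delta}$. Then $x\leqslant u$ for every $u\in A^{\uparrow}$, so $A^{\uparrow}\subseteq\uparrow\! x$, and since $\uparrow\! x\in K(X)$ the characterization just obtained places $\uparrow\! x$ in $(\diamond A)^{\delta_{K(X)}}=\diamond A$. Thus $\uparrow\! x\cap A\neq\emptyset$; picking $y\in\uparrow\! x\cap A$ gives $x\leqslant y\in A$, and since $A$ is closed and therefore a lower set in the specialization order, $x\in A$. Combined with the trivial $A\subseteq A^{\delta}$, this yields $A=A^{\delta}$, which together with the $T_0$ hypothesis on $X$ proves weak sobriety. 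The main obstacle, I expect, is the bookkeeping with the reversed order on $K(X)$ when identifying $(\diamond A)^{\delta_{K(X)}}$; choosing the right witnesses (the elements $\uparrow\! a$ for $a\in A$ in one direction, and the singletons $\{u\}$ for $u\in A^{\uparrow}$ in the other) is what collapses the two cut computations into comparable data.
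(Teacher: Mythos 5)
Your proposal is correct and follows essentially the same route as the paper's proof: pass to $\diamond A$, verify it is irreducible and closed, apply weak sobriety of $P^s(X)$, identify $(\diamond A)^{\delta_{K(X)}}$ as $\{K\in K(X): A^{\uparrow}\subseteq K\}$ via the witnesses $\uparrow\! a$, and conclude $A=A^{\delta}$. The only nitpick is that your test elements in the lower-bound computation should be $\uparrow\! u$ rather than the singletons $\{u\}$ (which need not be saturated in a merely $T_0$ space), but since every element of $K(X)$ is saturated this substitution changes nothing.
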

\begin{proof}
Suppose that $A$ is an irreducible closed subset in $(X, \mathcal{O}(X))$. Then it follows that $\diamond A=\{Q\in K(X):Q\cap A\neq\emptyset\}=\mbox{cl}_{P^s((X, \mathcal{O}(X)))}\{\uparrow a: a\in A\}$ is an irreducible closed subset in $P^s((X, \mathcal{O}(X)))$. As $P^s((X, \mathcal{O}(X)))$ is weakly sober, we have $\diamond A=(\diamond A)^{\delta_{K(X)}}$. Now we show that $\bigcap\limits_{Q\in \diamond A}Q=\bigcap\limits_{a\in A}\uparrow a$. Clearly, $\bigcap\limits_{Q\in \diamond A}Q\subseteq\bigcap\limits_{a\in A}\uparrow a$. Conversely, let $y\in \bigcap\limits_{a\in A}\uparrow a$. For any $Q\in \diamond A$, there exists $b\in Q\cap A$ such that $b\leqslant y$; and hence $y\in Q$. Then $y\in \bigcap\limits_{Q\in \diamond A}Q$. Thus it follows that $\bigcap\limits_{Q\in \diamond A}Q\supseteq\bigcap\limits_{a\in A}\uparrow a$. Therefore, $\bigcap\limits_{Q\in \diamond A}Q=\bigcap\limits_{a\in A}\uparrow a$. Now we prove that $A=A^{\delta}$. Clearly, $A\subseteq A^{\delta}$. Conversely, let $x\in A^{\delta}$. Then we have $\bigcap\limits_{a\in A}\uparrow a\subseteq \uparrow x$. As $(\diamond A)^{\uparrow_{K(X)}}$$=\bigcap\limits_{Q\in \diamond A}\uparrow_{K(X)}Q$$=\{P\in K(X):P\in \bigcap\limits_{Q\in \diamond A}\uparrow_{K(X)}Q\}$$=\{P\in K(X):P\subseteq \bigcap\limits_{Q\in \diamond A}Q\}$, we have $(\diamond A)^{\delta_{K(X)}}$$=\{P\in K(X):\bigcap\limits_{Q\in \diamond A}Q\subseteq P\}$$=\{P\in K(X):\bigcap\limits_{a\in A}\uparrow a\subseteq P\}$; and hence $\uparrow x\in (\diamond A)^{\delta_{K(X)}}=\diamond A$. So $x\in A$. Thus $A\supseteq A^{\delta}$. Hence $A=A^{\delta}$. Therefore, $(X, \mathcal{O}(X))$ is weakly sober.
\end{proof}

Next, we will give an example to show that even if the Smyth power space $P^s(X)$ of a topological space $X$ is quasisober, $X$ may not be quasisober.

\begin{example}\label{example:3.16}
Let $N$ be an infinite antichain, where $N$ denotes the set of all natural numbers. Consider the upper topological space $(N, \upsilon(N))$. Clearly, $(N, \upsilon(N))$ is a $T_0$ space, the set $Id(N)$ is equal to the set $\{\{n\}:n\in N\}$ and $N$ is an irreducible closed subset of $(N, \upsilon(N))$. Thus, $(N, \upsilon(N))$ isn't quasisober and the set $K(N)$ of all nonempty compact saturated subsets of $(N, \upsilon(N))$ is equal to the set $2^{N}\backslash \{\emptyset\}$. As the mapping $j:(N, \upsilon(N))\longrightarrow P^s((N, \upsilon(N)))(x\longmapsto \{x\})$ is continuous, $\mbox{cl}_{P^s((N, \upsilon(N)))}j(N)=2^{N}\backslash \{\emptyset\}$ is an irreducible closed subset in $P^s((N, \upsilon(N)))$. Let $D=\{N\backslash \{1, 2, 3,\cdots, n\}:n\in N\}$. Clearly, $D$ is a directed subset of $K(N)$ with $\cap D=\emptyset$. Thus $D^{\delta_{K(N)}}=K(N)=2^{N}\backslash \{\emptyset\}$. Let $\mathcal{H}$ be an irreducible closed subset of $P^s((N, \upsilon(N)))$ with $\mathcal{H}\subsetneqq K(N)$. Then there is $\{B_i:i\in I\}\subseteq N^{(<\omega)}\backslash \{\emptyset\}$ such that $\bigcap\limits_{i\in I}\diamond B_i=\mathcal{H}$. Thus for any $i\in I$, there exists $b_{ij}\in B_i$ such that $\diamond \{b_{ij}\}\in \{\diamond B_i:i\in I\}$. Suppose not, that is, for any $b_{ij}\in B_i$ we have $N\backslash \{b_{ij}\}\in \bigcap\limits_{i\in I}\diamond B_i=\mathcal{H}$. Then $\Box (N\backslash \{b_{ij}\})\cap (\bigcap\limits_{i\in I}\diamond B_i)\neq \emptyset$. Since $\bigcap\limits_{i\in I}\diamond B_i$ is an irreducible closed subset of $P^s((N, \upsilon(N)))$ with $B_i\in N^{(<\omega)}\backslash \{\emptyset\}$, we have $\Box (N\backslash B_i)\cap(\bigcap\limits_{i\in I}\diamond B_i)=(\bigcap\limits_{b_{ij}\in B_i}\Box N\backslash \{b_{ij}\})\cap(\bigcap\limits_{i\in I}\diamond B_i)\neq\emptyset$, contradicting $\Box (N\backslash B_i)\cap\diamond B_i=\emptyset$. Let $G=\{b_{ij}:b_{ij}\in B_i$  and $\diamond\{b_{ij}\}\in \{\diamond B_i:i\in I\}\}$. Now we prove that $\bigcap\limits_{i\in I}\diamond B_i=\bigcap\limits_{a\in G}\diamond\{a\}$. Clearly, $\bigcap\limits_{i\in I}\diamond B_i\subseteq\bigcap\limits_{a\in G}\diamond\{a\}$. Conversely, for any $C\in \bigcap\limits_{a\in G}\diamond\{a\}$ and $i\in I$, there exists $b_{ij}\in B_i$ such that $\diamond \{b_{ij}\}\in \{\diamond B_i:i\in I\}$; and hence $C\cap B_i\neq\emptyset$. Thus $C\in \bigcap\limits_{i\in I}\diamond B_i$. So $\bigcap\limits_{i\in I}\diamond B_i\supseteq\bigcap\limits_{a\in G}\diamond\{a\}$. Therefore $\bigcap\limits_{i\in I}\diamond B_i=\bigcap\limits_{a\in G}\diamond\{a\}$. Now we prove that $\bigcap\limits_{a\in G}\diamond\{a\}=\downarrow_{K(N)}G$. Clearly, $\bigcap\limits_{a\in G}\diamond\{a\}\supseteq \downarrow_{K(N)}G$. Conversely, let $C\in \bigcap\limits_{a\in G}\diamond\{a\}$. Then for any $a\in G$ we have $a\in C$; and hence $G\subseteq C$. So $C\in \downarrow_{K(N)}G$. Thus $\bigcap\limits_{a\in G}\diamond\{a\}\subseteq \downarrow_{K(N)}G$. Hence $\bigcap\limits_{a\in G}\diamond\{a\}=\downarrow_{K(N)}G=\mathcal{H}$. All these together show that $P^s((N, \upsilon(N)))$ is quasisober, while $(N, \upsilon(N))$ isn't quasisober.
\end{example}

\begin{theorem}\label{theorem:3.17}
Let $(X, \mathcal{O}(X))$ be a $T_0$ space.
\begin{enumerate}
\item[\emph{(1)}] Suppose that $(X, \mathcal{O}(X))$ is a WF space. If $P^s((X, \mathcal{O}(X)))$ is quasisober, then $(X, \mathcal{O}(X))$ is quasisober;
\item[\emph{(2)}] Suppose that the order of specialization on $(X, \mathcal{O}(X))$ is a sup semilattice. If $P^s((X, \mathcal{O}(X)))$ is quasisober, then $(X, \mathcal{O}(X))$ is quasisober.
\end{enumerate}
\end{theorem}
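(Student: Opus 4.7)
The plan is to take an arbitrary irreducible closed $A\subseteq X$ and exhibit a directed $D\subseteq X$ with $A=D^{\delta}=\mbox{cl}_XD$. Since $P^s(X)$ is quasisober it is in particular weakly sober, so Theorem $\ref{theorem:3.15}$ gives $X$ weakly sober, which by Remark $\ref{remark:2.3}$ makes $X$ a cut space as well. Thus $A=A^{\delta}$, and $\mbox{cl}_XD=D^{\delta}$ for every directed $D\subseteq X$. A preliminary identity used in both parts is
$$
\mathcal{D}^{\delta_{K(X)}}=\{P\in K(X):\bigcap\mathcal{D}\subseteq P\}
$$
for any directed $\mathcal{D}\subseteq K(X)$. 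The inclusion $\supseteq$ holds because every upper bound $Q$ of $\mathcal{D}$ in $K(X)$ satisfies $Q\subseteq\bigcap\mathcal{D}$, so $\bigcap\mathcal{D}\subseteq P$ forces $Q\subseteq P$. For $\subseteq$, each $x\in\bigcap\mathcal{D}$ yields $\uparrow\! x\in K(X)$ with $\uparrow\! x\subseteq\bigcap\mathcal{D}$ (as $\bigcap\mathcal{D}$ is an upper set), so $\uparrow\! x$ is itself an upper bound of $\mathcal{D}$, which forces $\uparrow\! x\subseteq P$ and hence $x\in P$.

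For (1), I would invoke the standard fact that in a WF space the intersection $K_0:=\bigcap\mathcal{D}$ of a directed family of nonempty compact saturated sets is itself nonempty and compact saturated (both properties follow from the WF definition applied to $\emptyset$ and to arbitrary open covers). Apply quasisoberness of $P^s(X)$ to the irreducible closed subset $\diamond A$ to write $\diamond A=\mathcal{D}^{\delta_{K(X)}}$ for some directed $\mathcal{D}\subseteq K(X)$. The identity with $P=K_0$ yields $K_0\in\diamond A$, so $K_0\cap A\neq\emptyset$; pick $x\in K_0\cap A$. For every $a\in A$, $\uparrow\! a\in\diamond A$, so the identity also gives $K_0\subseteq\uparrow\! a$, whence $x\geq a$. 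Thus $x$ is a greatest element of $A$, $A=\downarrow\! x=\{x\}^{\delta}=\mbox{cl}_X\{x\}$, and $D:=\{x\}$ is the required (trivially directed) set.

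For (2), I would bypass $\mathcal{D}$ entirely and show that $A$ itself is directed, so that $D:=A$ works (with $A=\mbox{cl}_XA=A^{\delta}$ as above). For any $a,b\in A$, every element of $A^{\uparrow}$ is above both $a$ and $b$, hence above $a\vee b$, so $A^{\uparrow}\subseteq\uparrow\!(a\vee b)$; the operator $(\cdot)^{\downarrow}$ reverses inclusions, giving
$$
\downarrow\!(a\vee b)=(\uparrow\!(a\vee b))^{\downarrow}\subseteq(A^{\uparrow})^{\downarrow}=A^{\delta}=A,
$$
so $a\vee b\in A$. Hence $A$ is closed under binary joins and in particular directed.

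The main technical input in (1) is the preservation of nonempty compact saturated sets under filtered intersection in WF spaces; once this is granted, the cut-closure identity immediately produces a maximum of $A$. Part (2) is essentially an order-theoretic consequence of weak soberness together with the sup-semilattice hypothesis, and the quasisoberness of $P^s(X)$ enters only through Theorem $\ref{theorem:3.15}$.
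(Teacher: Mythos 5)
Your proof is correct, but it is organized quite differently from the paper's, and in each part it is leaner. The paper also starts from a directed $\mathcal{F}\subseteq K(X)$ with $\mathcal{F}^{\delta_{K(X)}}=\diamond A$, but then replaces it by the auxiliary family $\mathcal{H}=\{\uparrow\!(F\cap A):F\in\mathcal{F}\}$ in order to force $\bigcap\mathcal{H}=A^{\uparrow}$ exactly; your identity $\mathcal{D}^{\delta_{K(X)}}=\{P\in K(X):\bigcap\mathcal{D}\subseteq P\}$ (which the paper states only implicitly, as the equivalence $Q\in\mathcal{H}^{\delta_{K(X)}}\Leftrightarrow\bigcap\mathcal{H}\subseteq Q$) lets you skip that construction, since all you need is $K_0\subseteq\uparrow\!a$ for every $a\in A$ together with $K_0\in\diamond A$. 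In part (1) the paper then proves that $\bigcap\mathcal{H}$ is supercompact, hence of the form $\uparrow\!x$, and only afterwards locates $x$ in $A$; you bypass supercompactness entirely by picking any $x\in K_0\cap A$ and observing it is a greatest element of $A$ --- a genuinely shorter route to the same conclusion (both proofs in fact show $A$ is a point closure, i.e.\ sobriety in case (1)). In part (2) the paper selects points $a_h\in h\cap A$ and builds the directed set of finite joins $\bigvee_{h\in K}a_h$, whereas you show that $A$ itself is closed under binary joins and hence directed, using only $A=A^{\delta}$ (from Theorem \ref{theorem:3.15} plus Remark \ref{remark:2.3}) and the sup-semilattice hypothesis; this isolates the cleaner statement that a weakly sober $T_0$ space whose specialization order is a sup semilattice is automatically quasisober, with the quasisobriety of $P^s(X)$ entering only through Theorem \ref{theorem:3.15}. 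All the auxiliary facts you invoke (filtered intersections of nonempty compact saturated sets in a WF space are nonempty, compact and saturated; $\uparrow\!x\subseteq\bigcap\mathcal{D}$ because the intersection is saturated) are standard and correctly justified, so the argument stands.
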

\begin{proof}
Suppose that $A$ is an irreducible closed subset in $(X, \mathcal{O}(X))$. Then $\diamond A=\{Q\in K(X):Q\cap A\neq\emptyset\}=\mbox{cl}_{P^s((X, \mathcal{O}(X)))}\{\uparrow\! a: a\in A\}$ is an irreducible closed subset in $P^s((X, \mathcal{O}(X)))$. As $P^s((X, \mathcal{O}(X)))$ is quasisober, there exists a directed subset $\mathcal{F}\subseteq K(X)$ such that $\mathcal{F}^{\delta_{K(X)}}=\diamond A$ with $\mathcal{F}\subseteq \diamond A$. Let $\mathcal{H}=\{\uparrow\! (F\cap A):F\in \mathcal{F}\}$. Then $\mathcal{H}$ is directed subset of $K(X)$ with $\mathcal{H}\subseteq \diamond A$. Thus $\mathcal{H}^{\delta_{K(X)}}\subseteq \diamond A=\mathcal{F}^{\delta_{K(X)}}$. Conversely, since $\mathcal{H}^{\uparrow_{K(X)}}\subseteq \mathcal{F}^{\uparrow_{K(X)}}$, it follows that $\mathcal{H}^{\delta_{K(X)}}\supseteq \diamond A=\mathcal{F}^{\delta_{K(X)}}$. Hence $\mathcal{H}^{\delta_{K(X)}}=\diamond A=\mathcal{F}^{\delta_{K(X)}}$. Therefore, $Q\in \mathcal{H}^{\delta_{K(X)}}\Leftrightarrow (\cap \mathcal{H}\subseteq Q$ and $Q\in K(X))\Leftrightarrow (Q\in K(X)$ and $Q\cap A\neq\emptyset)$. Now we prove that $\cap \mathcal{H}=A^{\uparrow}$. Clearly, $\cap \mathcal{H}\supseteq A^{\uparrow}$. Conversely, for each $a\in A$ we have $\uparrow\!a\in \diamond A=\mathcal{H}^{\delta_{K(X)}}$, and hence $\cap \mathcal{H}\subseteq \uparrow\!a$. So $\cap \mathcal{H}\subseteq \bigcap\limits_{a\in A}\uparrow\! a=A^{\uparrow}$. Thus $\cap \mathcal{H}=A^{\uparrow}$.

(1) Since $(X, \mathcal{O}(X))$ is a WF space, it follows that $\cap \mathcal{H}\in K(X)$ and thus $\downarrow_{K(X)}\cap \mathcal{H}=\mathcal{H}^{\delta_{K(X)}}=\diamond A=\mbox{cl}_{P^s((X, \mathcal{O}(X)))}\{\uparrow\! a: a\in A\}$. Now we claim that $\cap \mathcal{H}$ is supercompact in $(X, \mathcal{O}(X))$. Suppose $\{U_i\in \mathcal{O}(X):i\in I\}$ with $\cap \mathcal{H}\subseteq \bigcup\limits_{i\in I}U_i$. There exists $I_0\in I^{(<\omega)}\backslash \{\emptyset\}$ such that $\cap \mathcal{H}\subseteq \bigcup\limits_{i\in I_0}U_i$. Thus $\Box(\bigcup\limits_{i\in I_0}U_i)\cap \{\uparrow\! a: a\in A\}\neq\emptyset$. So there is a $i_0\in I_0$ such that $\Box U_{i_0}\cap \{\uparrow\! a: a\in A\}\neq\emptyset$. Hence, $\Box U_{i_0}\cap \downarrow_{K(X)}(\cap \mathcal{H})=\Box U_{i_0}\cap \mbox{cl}_{P^s((X, \mathcal{O}(X)))}\{\uparrow\! a: a\in A\}\neq\emptyset$. Therefore, $\cap\mathcal{H}\subseteq U_{i_0}$, and thus $\cap \mathcal{H}$ is supercompact in $(X, \mathcal{O}(X))$. So there exists $x\in X$ such that $A^{\uparrow}=\cap\mathcal{H}=\uparrow\!x$. Since $Q\in \mathcal{H}^{\delta_{K(X)}}\Leftrightarrow (\cap \mathcal{H}\subseteq Q$ and $Q\in K(X))\Leftrightarrow (Q\in K(X)$ and $Q\cap A\neq\emptyset)$, it follows that $\uparrow\!x\cap A\neq\emptyset$ and thus $x\in A$. Hence $A=\downarrow\!x=\{x\}^{\delta}$. Therefore, $(X, \mathcal{O}(X))$ is quasisober.

(2)As for each $h\in \mathcal{H}$ we have $h\cap A\neq\emptyset$, we can choose a point $a_h$ from $h\cap A$. Thus $\bigcap\limits_{h\in \mathcal{H}}\uparrow\! a_h\subseteq \cap \mathcal{H}=$\\$A^{\uparrow}$. Since $A^{\uparrow}=\bigcap\limits_{a\in A}\uparrow\! a\subseteq \bigcap\limits_{h\in \mathcal{H}}\uparrow\! a_h$ and the specialization order of $(X, \mathcal{O}(X))$ is a sup semilattice , it follows that $\{ \bigvee\limits_{h\in K}a_h:K\in \mathcal{H}^{(<\omega)}\backslash \{\emptyset\}\}$ is a directed subset of $X$ and we have $A^{\uparrow}$$=\bigcap\limits_{a\in A}\uparrow\! a$$=\bigcap\limits_{h\in \mathcal{H}}\uparrow\! a_h$$=\cap\{\uparrow\! \bigvee\limits_{h\in K}a_h:K\in \mathcal{H}^{(<\omega)}\backslash \{\emptyset\}\}$$=\{ \bigvee\limits_{h\in K}a_h:K\in \mathcal{H}^{(<\omega)}\backslash \{\emptyset\}\}^{\uparrow}$ . Thus $A^{\delta}=\{ \bigvee\limits_{h\in K}a_h:K\in \mathcal{H}^{(<\omega)}\backslash \{\emptyset\}\}^{\delta}$. By theorem 3.15 and remark 2.3, we have $A^{\delta}=A$. Thus $A=\{ \bigvee\limits_{h\in K}a_h:K\in \mathcal{H}^{(<\omega)}\backslash \{\emptyset\}\}^{\delta}$. Therefore, $(X, \mathcal{O}(X))$ is quasisober.
\end{proof}

\end{document}